\documentclass[a4paper]{amsart}

\synctex=1

\ifx\MyBeamer\undefined
\usepackage[breaklinks,colorlinks]{hyperref}
\else
\if\MyBeamer t
\usepackage[breaklinks,colorlinks]{hyperref}
\else
\def\emph{\alert}
\fi
\fi

\usepackage{amsmath}
\usepackage{amssymb}
\usepackage{mathrsfs}
\usepackage{amsthm}
\usepackage{aliascnt}
\usepackage[utf8]{inputenc}
\usepackage[T1]{fontenc}
\usepackage[nofancy]{svninfo}
\ifx\FrenchText\undefined
\usepackage[british]{babel}
\else
\usepackage[british,french]{babel}
\AtBeginDocument{%
\catcode`\:=12%
\catcode`\!=12%
}
\fi

\makeatletter

\sloppy

\newcounter{quotethmcnt}

\ifx\MyBeamer\undefined

\def\equationautorefname~#1\null{(#1)}
\def\itemautorefname~#1\null{#1}
\fi

\ifx\MyBeamer\undefined

\ifx\thmnum\undefined

\fi

\newcommand{\mynewthm}[3][]{%
  \newaliascnt{#2}{thmnum}%
  \newtheorem{#2}[#2]{#3}%
  \aliascntresetthe{#2}%
  \newtheorem*{#2*}{#3}%
  \expandafter\newcommand\csname #2autorefname\endcsname{#3}%
  \expandafter\renewcommand\csname the#2\endcsname{\thethmnum}%
}

\newtheorem*{clm}{Claim}
\newenvironment{clmprf}{%
  \begin{proof}[Proof of claim]%
  }{\end{proof}}

\else

\let\xxx=\frametitle
\def\frametitle#1{%
  \xxx{%
    \setbeamercolor*{math text}{use={titlelike,my math text},fg=titlelike.fg!80!my math text.fg}%
    #1}%
  \setbeamercolor{math text}{use=my math text,fg=my math text.fg}%
}

\newcommand{\beamerenv}[3]{%
\newenvironment<>{#1}%
{%
  \setbeamercolor{temp}{fg=structure.fg}%
  \setbeamercolor{structure}{fg=#2}%
  \setbeamercolor{block body}{use=structure,bg=structure.fg!5!white}%
  \begin{#3}%
}%
{\end{#3}\setbeamercolor{structure}{fg=temp.fg}}}

\newcommand{\mynewthm}[3][green!50!black]{%
  \newtheorem*{#2x}{#3}%
  \beamerenv{#2}{#1}{#2x}%
}

\beamerenv{other}{violet}{block}

\fi

\ifx\FrenchText\undefined
\newcommand{\myiffrench}[2]{#2}
\else
\newcommand{\myiffrench}[2]{\iflanguage{french}{#1}{#2}}
\fi

\theoremstyle{plain}
\mynewthm[red]{thm}{\myiffrench{Théorème}{Theorem}}
\mynewthm{prp}{Proposition}
\mynewthm[purple]{lem}{\myiffrench{Lemme}{Lemma}}
\mynewthm[orange]{fct}{\myiffrench{Fait}{Fact}}
\mynewthm[purple!50!white]{cor}{\myiffrench{Corollaire}{Corollary}}

\theoremstyle{definition}
\mynewthm[green!80!black]{dfn}{\myiffrench{Définition}{Definition}}
\mynewthm{hyp}{\myiffrench{Hypothèse}{Hypothesis}}
\mynewthm{conv}{Convention}
\mynewthm{conj}{Conjecture}
\mynewthm{ntn}{Notation}
\mynewthm{cst}{Construction}

\theoremstyle{remark}
\mynewthm[yellow!90!black]{rmk}{\myiffrench{Remarque}{Remark}}
\mynewthm{qst}{Question}
\mynewthm{exc}{\myiffrench{Exercice}{Exercise}}
\mynewthm{exm}{\myiffrench{Exemple}{Example}}

\ifx\MyBeamer\undefined

\newcommand{\myenumlabel}[1]{\textnormal{(\roman{#1})}}

\fi

\newcounter{cycprfcnt}
\newcounter{cycprffirst}
\newcommand{\cycprfpreamble}[1]%
{%
  \setcounter{cycprfcnt}{1}
  \setcounter{cycprffirst}{#1}
  \setlength{\itemindent}{0.5\leftmargin}%
  \setlength{\leftmargin}{0pt}%
  \newcommand{\cpcurr}{\myenumlabel{cycprfcnt}}%
  \newcommand{\cpnext}{\addtocounter{cycprfcnt}{1}\cpcurr}%
  \newcommand{\impnext}{\cpcurr{} $\Longrightarrow$ \cpnext.}%
  \def\makelabel##1{\ifnum\value{cycprffirst}=0\hspace{-0.7\itemindent}\setcounter{cycprffirst}{1}\fi##1}%
}%

{\begin{list}{\impnext}{\cycprfpreamble{#1}}}%
{\qedhere\end{list}}%

\newenvironment{cycprf*}[1][0]%
{\begin{list}{\impnext}{\cycprfpreamble{#1}}}%
{\end{list}}%

\def\indsym#1#2{%
  \setbox0=\hbox{$\m@th#1x$}%
  \kern\wd0%
  \hbox to 0pt{\hss$\m@th#1\mid$\hbox to 0pt{$\m@th#1^{#2}$\hss}\hss}%
  \lower.9\ht0\hbox to 0pt{\hss$\m@th#1\smile$\hss}%
  \kern\wd0}

\def\nindsym#1#2{%
  \setbox0=\hbox{$\m@th#1x$}%
  \kern\wd0%
  \hbox to 0pt{\hss$\m@th#1\not$\kern1.4\wd0\hss}
  \hbox to 0pt{\hss$\m@th#1\mid$\hbox to 0pt{$\m@th#1^{#2}$\hss}\hss}%
  \lower.9\ht0\hbox to 0pt{\hss$\m@th#1\smile$\hss}%
  \kern\wd0}

\def\dotminussym#1#2{%
  \setbox0=\hbox{$\m@th#1-$}%
  \kern.5\wd0%
  \hbox to 0pt{\hss\hbox{$\m@th#1-$}\hss}%
  \raise.6\ht0\hbox to 0pt{\hss$\m@th#1.$\hss}%
  \kern.5\wd0}

\renewcommand{\emptyset}{\varnothing}
\renewcommand{\setminus}{\smallsetminus}

\usepackage{eucal}
\usepackage[osf]{mathpazo}

\DeclareMathOperator{\Iso}{Iso}

\DeclareMathOperator{\Homeo}{Homeo}

\newcommand{\bN}{\mathbf{N}}

\makeatother

\usepackage{xcolor}
\definecolor{dark-red}{rgb}{0.5,0.15,0.15}
\definecolor{dark-blue}{rgb}{0.15,0.15,0.5}
\definecolor{medium-blue}{rgb}{0,0,0.5}
\hypersetup{
    colorlinks, linkcolor={dark-red},
    citecolor={dark-blue}, urlcolor={medium-blue}
}

\usepackage{enumitem}
\setlist[enumerate,1]{label=(\roman*), font=\normalfont}

\title{Metrizable universal minimal flows of Polish groups have a comeagre orbit}

\author[I.~Ben~Yaacov]{Ita{\"\i} \textsc{Ben~Yaacov}}
\address{Ita{\"\i} \textsc{Ben Yaacov} \\
  Université Claude Bernard -- Lyon 1 \\
  Institut Camille Jordan, CNRS UMR 5208 \\
  43 boulevard du 11 novembre 1918 \\
  69622 Villeurbanne Cedex \\
  France}
\urladdr{\url{http://math.univ-lyon1.fr/~begnac/}}

\author[J.~Melleray]{Julien \textsc{Melleray}}
\address{Julien \textsc{Melleray} \\
  Université Claude Bernard -- Lyon 1 \\
  Institut Camille Jordan, CNRS UMR 5208 \\
  43 boulevard du 11 novembre 1918 \\
  69622 Villeurbanne Cedex \\
  France}
\email{melleray@math.univ-lyon1.fr}
\urladdr{\url{http://math.univ-lyon1.fr/~melleray/}}

\author[T.~Tsankov]{Todor \textsc{Tsankov}}
\address{Todor \textsc{Tsankov}\\
Institut de Math\'ematiques de Jussieu--PRG \\
Universit\'e Paris Diderot, Case 7012 \\
75205 Paris \sc{cedex} 13 \\
France}
\email{todor@math.univ-paris-diderot.fr}
\urladdr{\url{http://webusers.imj-prg.fr/~todor.tsankov/}}

\date{January 2016}
\keywords{topological dynamics, Polish groups, universal minimal flow, metrizability, comeagre orbit}
\subjclass[2010]{54H20}

\newcommand{\actson}{\curvearrowright}
\DeclareMathOperator{\ucb}{UC_b}
\DeclareMathOperator{\cli}{C_L}
\newcommand{\dR}{d_{\mathrm{R}}}

\newcommand{\cl}[2][]{\overline{#2}^{#1}}
\newcommand{\sub}{\subseteq}


\begin{document}
\begin{abstract}
We prove that, whenever $G$ is a Polish group with metrizable universal minimal flow $M(G)$, there exists a comeagre orbit in $M(G)$. It then follows that there exists an extremely amenable, closed, co-precompact subgroup $G^*$ of $G$ such that $M(G) = \widehat{G/G^*}$. 
\end{abstract}

\maketitle
\section{Introduction}

The \emph{universal minimal flow (UMF)} $M(G)$ of a topological group $G$ is a compact dynamical system of $G$ canonically associated with the group which is of great interest in topological dynamics (see the end of the introduction for the precise definitions). For locally compact, non-compact groups, the UMF is never metrizable and cannot be described in a meaningful way. However, for many important non-locally compact groups, it turns out that this flow is metrizable, can be computed, and carries interesting information. For example, the UMF of the unitary group of a separable, infinite-dimensional Hilbert space is a point (Gromov--Milman~\cite{Gromov1983}) (in that case, we say that the group is \emph{extremely amenable}), the UMF of $\Homeo(S^1)$ is $S^1$ (Pestov~\cite{Pestov1998}), and the UMF of the infinite symmetric group is the space of all linear orderings on a countable set (Glasner--Weiss~\cite{Glasner2002a}).

The systematic study of metrizable UMFs was initiated in the influential paper \cite{Kechris2005} by Kechris, Pestov, and Todorcevic, where they established a connection with Ramsey theory and computed many interesting examples. It turns out that all known concrete computations of UMFs can be carried out using a single technique, first employed by Pestov~\cite{Pestov2002a}: isolating a co-precompact, extremely amenable subgroup $G^*$ of the group of interest $G$, and then observing that any minimal subflow of the completion $\widehat{G/G^*}$ must be isomorphic to $M(G)$. (In practice, with a judicious choice of $G^*$, $\widehat{G/G^*}$ is often already minimal and such a choice can always be made; see Corollary~\ref{c:minimal}.) Thus computing a metrizable UMF reduces to finding an extremely amenable, co-precompact subgroup for which a variety of techniques have been developed (see Pestov~\cite{Pestov2006}). In this paper, we show that this approach always works.
\begin{thm}
  \label{t:main1}
  Let $G$ be a Polish group such that $M(G)$ is metrizable. Then there exists a closed, co-precompact, extremely amenable subgroup $G^* \leq G$ such that $M(G) = \widehat{G/G^*}$.
\end{thm}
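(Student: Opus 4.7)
The abstract lays out a two-step strategy, which I would follow: first prove that a metrizable $M(G)$ admits a comeagre $G$-orbit --- the main technical content of the paper --- then deduce Theorem~\ref{t:main1} by taking $G^{*}$ to be the stabilizer of a point in such an orbit.

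Given the comeagre orbit, the deduction is as follows. Let $x_{0}\in M(G)$ lie in the comeagre orbit and set $G^{*}:=\Stab_{G}(x_{0})$; it is closed by continuity of the action. Effros's theorem applies because $Gx_{0}$ is non-meagre, identifying the orbit $Gx_{0}$ with $G/G^{*}$ as topological $G$-spaces. I would then enhance this to a uniform isomorphism between the right uniformity on $G/G^{*}$ and the subspace uniformity inherited from the compact metric $M(G)$: uniform continuity of the orbit map follows from joint continuity of the $G$-action on the compact $M(G)$, and the reverse uniform continuity follows from Effros openness combined with comeagreness (any $Ux_{0}$ with $U\ni e$ open is open in the comeagre set $Gx_{0}$, hence contains the intersection with $Gx_{0}$ of a small metric ball about $x_{0}$). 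It follows at once that $G^{*}$ is co-precompact and that $M(G)=\widehat{G/G^{*}}$ as $G$-flows.

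For extreme amenability of $G^{*}$, I would invoke the known Pestov--KPT-style criterion that, for a closed co-precompact subgroup $H\leq G$, the flow $\widehat{G/H}$ is the universal minimal flow of $G$ iff $H$ is extremely amenable. A direct argument would induce up: given a minimal $G^{*}$-flow $Y$, form the $G$-bundle $(G\times Y)/G^{*}$ over $G/G^{*}$, complete to a $G$-flow $\hat{Z}\twoheadrightarrow M(G)$ with fibre $Y$ over $x_{0}$ carrying its original $G^{*}$-action, take a minimal subflow (a factor of $M(G)$ by universality), and exploit the $G^{*}$-fixed point at $x_{0}$ to extract a $G^{*}$-fixed point in $Y$, forcing $\lvert Y\rvert=1$ by minimality.

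The main obstacle is the first step --- existence of the comeagre orbit --- since this cannot come from minimality alone. Metrizable minimal Polish-group flows can easily lack a comeagre orbit (e.g.\ $\mathbb{Z}$-odometers), so the universal property of $M(G)$ as a UMF must enter essentially. My plan would be a Baire-category argument in $M(G)$: fix a compatible metric $d$, and for each $n$ consider
\[
V_{n}:=\{x\in M(G):\exists\,U\ni e\text{ open in }G,\ \diam(Ux)<1/n\},
\]
each of which is tautologically open. The density of each $V_{n}$ is the real difficulty; I would expect it to be proved by contradiction, with a nowhere-dense $V_{n}$ producing a non-trivial quotient structure on $M(G)$ incompatible with its universal property. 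The intersection $\bigcap_{n}V_{n}$ would then give the comeagre set of points at which the orbit map is open at $e$; by Effros, the orbit of any such point is non-meagre, and being dense in the minimal flow $M(G)$ it is comeagre.
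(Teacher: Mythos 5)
There is a genuine gap, and it sits exactly where the theorem's content lies. Your reduction of Theorem~\ref{t:main1} to the existence of a comeagre orbit is fine --- that implication is \cite[Theorem~1.2]{Melleray15}, which the paper simply cites, and your sketch of it (stabilizer of a point in the comeagre orbit, Effros, inducing minimal $G^*$-flows up to $G$) is the right outline. But the existence of the comeagre orbit is the main theorem of the paper, and your plan for it does not work as stated. The sets $V_n=\{x\in M(G):\exists\,U\ni e,\ \diam(Ux)<1/n\}$ are \emph{all} of $M(G)$: a $G$-flow is by definition a jointly continuous action, so for every $x$ and every $n$ there is a neighbourhood $U$ of $e$ with $\diam(Ux)<1/n$. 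The condition you wrote expresses continuity of the orbit map at $e$, not openness; consequently $\bigcap_n V_n=M(G)$ carries no information, and the final appeal to Effros (which needs a \emph{non-meagre} orbit as a hypothesis, or openness of the orbit map as a conclusion) does not follow. Even after repairing the definition so that $V_n$ captures some quantitative openness of the orbit map, the density of these sets is precisely the hard part, and ``a nowhere-dense $V_n$ would produce a quotient structure incompatible with universality'' is a hope, not an argument --- no such contradiction is exhibited, and it is far from clear how universality alone would produce one.

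For comparison, the paper's route is quite different: it does not run a Baire-category argument inside $M(G)$ directly. It equips the Samuel compactification $S(G)$ (of $G$ with a bounded right-invariant metric $\dR$) with a topometric structure $(\tau,\partial)$, proves that $\partial$-distances between $\tau$-open sets are computed by $\dR$-distances of their traces on $G$ (Lemma~\ref{l:caraccompact1}), deduces that on any $\tau$-metrizable subset the two topologies coincide (Corollary~\ref{c:metrizable-subsp}), and then uses a $G$-equivariant retraction $r\colon S(G)\to M(G)$ together with Rosendal's criterion (Proposition~\ref{p:comeagre-orbit}) to verify a local transitivity condition: inside any open $U\sub M(G)$ one finds $U'$ of small $\partial$-diameter, and pulling open sets $W_1,W_2\sub U'$ back through $r$ produces group elements $f_1,f_2$ with $f_2f_1^{-1}$ small moving $W_1$ to meet $W_2$. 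The universal property enters only through realizing $M(G)$ as a retract of $S(G)$. So the part of your proposal that is genuinely new relative to the citation is missing its proof, and the mechanism you guessed at (a contradiction from a quotient structure) is not the one that makes the theorem true.
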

Of course, conversely, if $G$ is Polish and $G^* \leq G$ is co-precompact, then $\widehat{G/G^*}$ is metrizable.

Our proof is based on the recent work of Melleray, Nguyen Van Thé, and Tsankov \cite{Melleray15}, where the conclusion of Theorem~\ref{t:main1} is proved under the additional hypothesis that $M(G)$ has a comeagre orbit (then $G^*$ can be taken to be the stabilizer of a point in this orbit). Thus the main theorem proved in this paper is the following.
\begin{thm}
  \label{t:main2}
  Let $G$ be a Polish group whose universal minimal flow $M(G)$ is metrizable. Then $M(G)$ has a comeagre orbit.
\end{thm}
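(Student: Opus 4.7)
The plan is a proof by contradiction, exploiting Effros's theorem to reduce the statement to finding a non-meagre orbit, and then combining descriptive set-theoretic and Ellis-semigroup arguments to locate one via universality.

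For the reduction: both $G$ and $M := M(G)$ are Polish, with $M$ compact, so Effros's theorem applies to the action $G \actson M$. It asserts that for any $x \in M$, the orbit $Gx$ is non-meagre in $M$ if and only if $Gx$ is $G_\delta$, if and only if the orbit map $g \mapsto gx$ from $G$ to $M$ is open. Since $M$ is minimal, a non-meagre orbit is automatically dense, hence a dense $G_\delta$, and therefore comeagre and unique. So the task reduces to exhibiting a single non-meagre orbit, and I suppose toward a contradiction that every orbit in $M$ is meagre.

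Before attempting the contradiction, let me record a useful consequence of this working assumption. The orbit equivalence relation $E = \{(x,y) \in M \times M : y \in Gx\}$ is the image of $M \times G$ under the continuous map $(x,g) \mapsto (x, gx)$, hence analytic, and so enjoys the Baire property; and all of its fibres $E^x = Gx$ are meagre by assumption. By Kuratowski--Ulam, $E$ is then meagre in $M \times M$: for comeagrely many pairs $(x, y)$, one has $y \notin Gx$. This is the genericity statement I would aim to exploit.

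It now remains to derive a contradiction, and this is the main obstacle. The natural strategy is to invoke the structure of $M(G)$ as a minimal left ideal of the Samuel (greatest ambit) compactification $S(G)$, a compact right-topological semigroup in which $G$ sits densely. Fix a minimal idempotent $u \in M \subseteq S(G)$ (existence by Ellis's lemma) and consider its orbit $Gu$. Metrizability of $M$ ought to constrain both the stabilizer $G_u \leq G$ and the Ellis group $u S(G) u$ tightly enough to ensure that the orbit map $G \to Gu$ is open at the identity; once this openness is established, $Gu$ is comeagre by Effros, contradicting the working assumption. The hard part is making this precise: I expect it to require a delicate combination of the Ellis semigroup machinery with a descriptive-set-theoretic analysis (perhaps a Vaught transform computation on the meagre relation $E$) that effectively uses a countable separating family of $\ucb$-functions on $G$ coming from metrizability of $M$. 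The dynamical content of the argument lies entirely in this step, whereas the Effros reduction and the Kuratowski--Ulam consequence above are essentially formal.
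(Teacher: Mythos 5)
Your proposal stops exactly where the proof has to begin. The Effros reduction (comeagre orbit $\Leftrightarrow$ some non-meagre orbit, by minimality and the zero--one law) and the Kuratowski--Ulam observation about the orbit equivalence relation are, as you yourself say, essentially formal; the latter in fact plays no role in any argument I know of for this theorem. What is missing is the entire mechanism by which metrizability of $M(G)$ is converted into the required local statement about the action -- and your sketch of that step (``metrizability ought to constrain the stabilizer $G_u$ and the Ellis group $uS(G)u$ tightly enough to ensure the orbit map is open at the identity'') is a restatement of the theorem, not an argument. The difficulty is that $S(G)$ is enormous and non-metrizable, and there is no evident way to transfer metric information about $M(G)$ back to neighbourhoods of $1_G$ in $G$ through the Ellis semigroup alone; nothing in your plan (idempotents, the Ellis group, a Vaught transform on the meagre relation $E$) supplies such a transfer, and the proposed ``countable separating family of $\ucb$-functions'' is not developed into anything usable.

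The paper's proof supplies precisely this missing device: a \emph{topometric} structure on $S(G)$. One fixes a bounded right-invariant metric $\dR$ on $G$ and defines a $\tau$-lower semicontinuous metric $\partial$ on $S(G)$ via bounded $1$-Lipschitz functions, with $\partial = \dR$ on $G$. Two facts then do the work: (a) every $\tau$-convergent sequence in $S(G)$ is $\partial$-convergent, so on any metrizable compact subset -- in particular on a copy of $M(G)$ inside $S(G)$ -- the topologies $\tau$ and $\partial$ coincide, letting one choose open sets of small $\partial$-diameter inside any open $U \sub M(G)$; and (b) for $\tau$-open $U,V \sub S(G)$ one has $\partial(U,V) = \dR(U \cap G, V \cap G)$, which, combined with a $G$-equivariant retraction $r \colon S(G) \to M(G)$ (from coalescence), produces group elements $f_1, f_2$ with $f_2 f_1^{-1}$ in a prescribed neighbourhood of $1_G$ moving one small open set into another. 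This verifies Rosendal's criterion for a comeagre orbit (a local condition equivalent to your Effros formulation). Without some analogue of (a) and (b) -- i.e.\ without a way to pull the metric on $M(G)$ back to actual distances in $G$ -- your outline cannot be completed, so as it stands the proposal has a genuine gap at its central step.
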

This theorem answers a question of Angel, Kechris, and Lyons \cite{Angel2014} (Question~15.2, ``the generic point problem'').

In a work simultaneous with (and independent from) \cite{Melleray15}, Zucker~\cite{Zucker2014p} proved Theorem~\ref{t:main1} for the important special case of \emph{non-archimedean} Polish groups (i.e., groups that admit a basis at the identity consisting of open subgroups). His approach is based on topological properties of the Stone--\v{C}ech compactification of a discrete set (the space of cosets $G/V$ for open subgroups $V$) and combinatorial techniques. As Polish groups do not usually admit open subgroups, his method does not directly generalize. In the present paper, we abstract some of his ideas and combine them with a new, \emph{topometric} approach to prove our main result. We note that, while our proof is based in part on Zucker's ideas, our argument is new even for the non-archimedean case.

In \cite{Melleray15}, many structural results are proved for metrizable UMFs with a comeagre orbit; with Theorem~\ref{t:main2}, now they automatically hold with the only assumption of metrizability.

We note that considering Polish groups in the place of arbitrary topological groups is not a real restriction in our setting: if $M(G)$ is metrizable, then the closure of the image of $G$ in $\Homeo(M(G))$ is a Polish group whose minimal flows are exactly the same as those of $G$, so as long as we are interested only in minimal flows, we can replace one with the other.

It is an interesting question whether there is a converse to Theorem~\ref{t:main2}.
\begin{qst}
  \label{q:converse}
  Suppose that $G$ is a Polish group such that $M(G)$ has a comeagre orbit. Is it true that $M(G)$ is metrizable?
\end{qst}
For the reader who prefers to only deal with metrizable spaces, the question can be rephrased equivalently as follows: if all metrizable $G$-flows have a comeagre orbit, does this imply that $M(G)$ is metrizable?

We conclude the introduction with the definitions of the notions that we have used above. A \emph{$G$-flow} is a continuous action of a topological group $G$ on a compact, Hausdorff space, and a flow is \emph{minimal} if all of its orbits are dense. A fundamental theorem of topological dynamics, due to Ellis, establishes the existence of a unique \emph{universal minimal flow} for any topological group $G$, that is, a minimal $G$-flow which maps continuously and equivariantly onto every minimal $G$-flow. A subgroup $H \leq G$ is called \emph{co-precompact} if the completion of the uniform space $G/H$ (equipped with the quotient of the right uniform structure of $G$) is compact, equivalently, if for any open $V \ni 1_G$, there exists a finite $F \sub G$  such that $VFH = G$.

The paper is organized as follows. We first describe some general properties of the topometric structure on the Samuel compactification of a metric space, then apply those in the dynamical setting to prove the main result, and finish by noting several corollaries. We have tried to keep the paper self-contained: in particular, everything that we need about topometric spaces is proved in the next section.

\subsection*{Acknowledgements} Research on this paper was partially supported by the ANR projects GruPoLoCo (ANR-11-JS01-008) and GAMME (ANR-14-CE25-0004). We are grateful to the anonymous referee for some helpful remarks and for providing a reference.


\section{A topometric structure on the Samuel compactification of a metric space}
Let $(X,d)$ be a bounded metric space. Recall that the \emph{Samuel compactification} $S(X)$ of $X$ is the Gelfand space of the algebra $\ucb(X)$ of all complex-valued, uniformly continuous, bounded functions on $X$. This compactification is characterized by the following universal property: whenever $K$ is a compact Hausdorff space, and $f \colon X \to K$ is uniformly continuous, $f$ extends to a continuous map from $S(X)$ to $K$. The algebra of all continuous bounded functions on $S(X)$ is naturally identified with $\ucb(X)$.

Our aim is to gain a better understanding of compact metrizable subsets of $S(X)$.
Note that, if $d$ is the discrete $0$--$1$ metric, then the Samuel compactification is the Stone--\v{C}ech compactification $\beta X$, which is extremally disconnected, so that its only compact metrizable subsets are finite. This informs our approach here: we wish to give a precise meaning to the intuition that compact metrizable subsets of $S(X)$ are small, and in the discrete case ``small'' means ``finite''. It is then natural, given the approach taken in \cite{BenYaacov2008c}, \cite{BenYaacove} or \cite{BenYaacov15a}, to try to define a topometric structure on $S(X)$ for which ``small'' means ``metrically compact''. This topometric structure was already considered in \cite{BenYaacove}, where it is called the \emph{topometric Stone--\v{C}ech compactification}; below we recall all relevant definitions.

\begin{dfn}
  A \emph{compact topometric space} is a triple $(Z, \tau, \partial)$, where $Z$ is a set, $\tau$ is a compact Hausdorff topology on $Z$, and $\partial$ is a distance on $Z$ such that the following conditions are satisfied:
  \begin{itemize}
  \item the $\partial$-topology refines $\tau$;
  \item $\partial$ is $\tau$-lower semicontinuous, i.e., the set $\{(a,b) \in Z^2 \colon \partial(a,b) \le r \}$ is $\tau$-closed for every $r \geq 0$.
  \end{itemize}
\end{dfn}

We define a topometric structure on $S(X)$ as follows: we let $\tau$ be the Gelfand topology (i.e., the weak$^*$ topology inherited from the dual of $\ucb(X)$) and define $\partial$ by:
\[
  \partial(a,b)= \sup \{|f(a)-f(b)| : f \in \cli(X) \},
\]
where $\cli(X)$ denotes the set of all bounded $1$-Lipschitz functions on $(X, d)$. To make sense of this, one must recall that functions in $\cli(X)$, being bounded and uniformly continuous, uniquely extend to continuous functions on $S(X)$. To see why the $\partial$-topology refines $\tau$, recall that any function in $\ucb(X)$ is a uniform limit of Lipschitz maps, so $\tau$ has a basis of open sets of the form 
\[
  \{a \in S(X) : f_1(a) \in I_1, \ldots, f_n(a) \in I_n \},
\]
where each $f_j$ belongs to $\cli(X)$ and each $I_j$ is an open interval.

As is usual when working with topometric spaces, we follow the convention that topological terms refer to $\tau$, and metric vocabulary refers to $\partial$; however, we will be careful to specify which one of the two we mean whenever there is danger of confusion.

Note that $\partial$ and $d$ coincide on $X$, and that elements of $\cli(X)$ extend to maps on $S(X)$ which are both $\tau$-continuous and $\partial$-$1$-Lipschitz. Note also that if $d$ is the discrete $0$--$1$ distance on $X$, then $\partial$ is the discrete $0$--$1$ distance on $S(X)$.

We will need the fact that the distance $\partial$ is complete on $S(X)$; in fact, this is true for any compact topometric space, as proved in \cite{BenYaacov2008c}.
\begin{lem}
Let $(Z, \tau, \partial)$ be a compact topometric space. Then $\partial$ is complete.
\end{lem}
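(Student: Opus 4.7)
The plan is to pick a $\partial$-Cauchy sequence $(x_n)$ in $Z$ and produce an explicit $\partial$-limit using $\tau$-compactness together with the lower semicontinuity hypothesis. Since $Z$ is $\tau$-compact Hausdorff, the sequence $(x_n)$ admits at least one $\tau$-cluster point $x \in Z$, and the claim I will establish is that $x_n \to x$ in the metric $\partial$.

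To show this, I would fix $\varepsilon > 0$ and pick $N$ so large that $\partial(x_n, x_m) \le \varepsilon$ for all $n, m \ge N$. For each fixed $n \ge N$, the sublevel set
\[
  C_n \;=\; \{y \in Z : \partial(x_n, y) \le \varepsilon\}
\]
is $\tau$-closed by the $\tau$-lower semicontinuity of $\partial$, and by choice of $N$ it contains every $x_m$ with $m \ge N$. Now $x$ is a $\tau$-cluster point of the tail $(x_m)_{m \ge N}$, hence lies in the $\tau$-closure of $\{x_m : m \ge N\} \subseteq C_n$. Consequently $\partial(x_n, x) \le \varepsilon$ for all $n \ge N$, which gives $\partial$-convergence $x_n \to x$.

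The single crucial ingredient is lower semicontinuity of $\partial$: it converts a uniform estimate on the Cauchy pairs into an estimate on the $\tau$-limit point, bridging the two topologies despite the fact that $\partial$ may generate a strictly finer topology than $\tau$. Compactness of $\tau$ is used only to guarantee the existence of the cluster point $x$; no appeal to metrizability of $\tau$, or to any structure specific to the Samuel compactification, is needed, which is why the statement is formulated for general compact topometric spaces. I do not expect a serious obstacle here—the argument is essentially the standard fact that a Cauchy sequence in a complete metric space converges to each of its cluster points, adapted so that the cluster point is extracted from a coarser ambient compact topology.
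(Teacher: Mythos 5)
Your argument is correct and uses exactly the same two ingredients as the paper's proof: $\tau$-compactness to produce a candidate limit and $\tau$-lower semicontinuity to make the $\partial$-balls (sublevel sets) $\tau$-closed. The paper merely packages this slightly differently, via the finite intersection property of nested closed balls $F_n = \{y : \partial(z_n,y) \le r_n\}$ rather than extracting a $\tau$-cluster point first, so this is essentially the same proof.
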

\begin{proof}
  Let $(z_n)$ be a Cauchy sequence; for all $n$, define $r_n= \sup \{\partial(z_n,z_m) : m \ge n\}$.
Then $r_n$ converges to $0$; let $F_n$ denote the closed ball of radius $r_n$ centred at $z_n$. Each $F_n$ is $\tau$-closed, hence compact, and since $F_n$ contains $z_m$ for all $m \ge n$, this family has the finite intersection property. By compactness, $\bigcap_{n \in \bN} F_n$ is non-empty; it must be a singleton, which is the $\partial$-limit of the sequence $(z_n)$.
\end{proof}

The following theorem generalizes a well-known fact from the discrete case.
\begin{thm}\label{t:caraccompact}
  Let $(X, d)$ be a metric space and let $(S(X), \tau, \partial)$ be the topometric Stone--\v{C}ech compactification of $X$. Then every $\tau$-convergent sequence in $S(X)$ is $\partial$-convergent.
\end{thm}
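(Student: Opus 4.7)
The plan is to argue by contradiction: suppose $z_n \to z$ in $\tau$ but $\partial(z_n, z) \not\to 0$. Passing to a subsequence, I may assume $\partial(z_n, z) \ge 4\varepsilon$ for some $\varepsilon > 0$ and every $n$. My aim is to construct a single $f \in \cli(X)$ whose continuous extension $\tilde f$ takes the value $\varepsilon/4$ at $z_{k_{2j}}$ and $0$ at $z_{k_{2j+1}}$ along a suitable subsequence $(k_n)$, contradicting continuity of $\tilde f$ at $z$.

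The first ingredient is a separation principle: if $C, D \sub X$ satisfy $d(C, D) \ge \delta > 0$, then $\cl[\tau]{C} \cap \cl[\tau]{D} = \emptyset$, since $x \mapsto \min(\delta, d(x, C))$ lies in $\cli(X)$, vanishes on $C$ and equals $\delta$ on $D$, so its extension takes distinct values on the two closures. The second ingredient is to translate $\partial(z_n, z) \ge 4\varepsilon$ into a metric witness in $X$: by definition of $\partial$, pick $f_n \in \cli(X)$ with $\tilde f_n(z_n) - \tilde f_n(z) \ge 2\varepsilon$ (negating if needed), and set
\[
  A_n = \{x \in X : f_n(x) > \tilde f_n(z_n) - \varepsilon/2\}, \quad B_n = \{x \in X : f_n(x) < \tilde f_n(z) + \varepsilon/2\}.
\]
By $\tau$-continuity of $\tilde f_n$ and density of $X$ in $S(X)$, one has $z_n \in \cl[\tau]{A_n}$ and $z \in \cl[\tau]{B_n}$; the $1$-Lipschitz property of $f_n$ forces $d(A_n, B_n) \ge \varepsilon$.

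The heart of the argument is a recursive extraction. I build $k_0 < k_1 < \cdots$ and sets $A_n' \sub A_{k_n}$ with $z_{k_n} \in \cl[\tau]{A_n'}$ and $d(A_n', A_m') \ge \varepsilon/2$ for all $m < n$. At stage $n$, having chosen $A_0', \dots, A_{n-1}'$, let $N = \bigcup_{m < n} \{x \in X : d(x, A_m') < \varepsilon/2\}$. Since $d(A_m', B_{k_m}) \ge \varepsilon$, each $\varepsilon/2$-thickening $\{x : d(x, A_m') < \varepsilon/2\}$ sits at metric distance at least $\varepsilon/2$ from $B_{k_m}$, so the separation principle shows its $\tau$-closure misses $\cl[\tau]{B_{k_m}} \ni z$; hence $z \notin \cl[\tau]{N}$. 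By $z_m \to z$, I pick $k_n > k_{n-1}$ with $z_{k_n}$ in the $\tau$-open set $S(X) \setminus \cl[\tau]{N}$ and set $A_n' = A_{k_n} \setminus N$; then the decomposition $\cl[\tau]{A_{k_n}} = \cl[\tau]{A_n'} \cup \cl[\tau]{A_{k_n} \cap N}$ combined with $z_{k_n} \notin \cl[\tau]{N}$ forces $z_{k_n} \in \cl[\tau]{A_n'}$, and the metric separation holds by construction. The main obstacle is precisely this inductive step: after the pruning, the point $z_{k_n}$ could a priori fall out of the closure. The pairing with the witnesses $B_{k_m}$, which lets metric separation in $X$ be promoted to $\tau$-separation in $S(X)$ via the separation principle, is exactly what allows the construction to continue.

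Finally, I define $f(x) = \sup_{j \ge 0} \max(0, \varepsilon/4 - d(x, A_{2j}'))$. Pairwise $\varepsilon/2$-separation of the $A_n'$ makes the summands have pairwise disjoint supports in $X$, so at every point at most one is non-zero; thus $f$ is $1$-Lipschitz and bounded by $\varepsilon/4$, hence belongs to $\cli(X)$. By construction $f \equiv \varepsilon/4$ on each $A_{2j}'$ and $f \equiv 0$ on each $A_{2j+1}'$, so the continuous extension $\tilde f$ satisfies $\tilde f(z_{k_{2j}}) = \varepsilon/4$ and $\tilde f(z_{k_{2j+1}}) = 0$, contradicting $\tilde f(z_{k_n}) \to \tilde f(z)$ and completing the proof.
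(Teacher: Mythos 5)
Your proof is correct. It shares the paper's underlying strategy --- use $1$-Lipschitz witnesses to promote metric separation of subsets of $X$ to $\tau$-closure disjointness in $S(X)$, surround a subsequence by pairwise metrically separated sets, and get a contradiction from an even/odd alternation at the limit --- but the decomposition is genuinely different. The paper first proves the quantitative identity $\partial(\cl[\tau]{U},\cl[\tau]{V}) = d(U\cap X, V\cap X)$ for $\tau$-open $U,V \sub S(X)$, then a pigeonhole extraction turning a pairwise $\partial$-separated sequence into points $b_n$ lying in $\tau$-open sets $U_n$ with $\partial(U_n,U_m)\ge\varepsilon$, and applies the identity to the even/odd unions; this only yields a $\partial$-Cauchy subsequence, so it must also invoke completeness of $\partial$ (proved separately) together with the fact that the $\partial$-topology refines $\tau$. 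You instead negate convergence directly at the $\tau$-limit $z$, which is why you never need completeness or a Cauchy-subsequence argument: the paired sets $B_n$, whose closures retain $z$, are exactly what allows your recursive pruning by $\varepsilon/2$-thickenings to keep $z_{k_n}$ in the closure of the pruned set, and your single alternating bump function $\sup_j \max(0,\varepsilon/4 - d(\cdot,A'_{2j}))$ plays the role of both the closure-distance lemma and the even/odd union trick. The trade-off: the paper's route produces a reusable statement (its Lemma on $\partial$-distances of closures of open sets is invoked again in the proof of the main theorem), whereas your route is more self-contained for this particular theorem and delivers $\partial$-convergence to the $\tau$-limit in one stroke.
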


In order to prove the theorem, we first establish two lemmas. The first is the topometric analogue of the fact that in the discrete case, $S(X)$ is extremally disconnected. If $A$, $B$ are two subsets of a metric space $(Z, d)$, we denote
\begin{equation*}
  d(A, B) = \inf \{d(a, b) : a \in A, b \in B \}.
\end{equation*}
\begin{lem}\label{l:caraccompact1}
Let $U,V$ be $\tau$-open subsets of $S(X)$. Then we have
\[
  \partial(\cl[\tau]{U}, \cl[\tau]{V})= \partial(U, V)= d(U \cap X, V \cap X).
\]
\end{lem}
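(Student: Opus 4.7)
The plan is to establish the chain of inequalities
\[
  \partial(\cl[\tau]{U}, \cl[\tau]{V}) \le \partial(U,V) \le d(U \cap X, V \cap X) \le \partial(\cl[\tau]{U}, \cl[\tau]{V}),
\]
the first two of which are essentially trivial. The first inequality follows simply because $U \sub \cl[\tau]{U}$ and $V \sub \cl[\tau]{V}$, so the infimum defining $\partial(\cl[\tau]{U},\cl[\tau]{V})$ is taken over a larger set. For the second, recall that $\partial$ restricted to $X$ agrees with $d$, and $U \cap X \sub U$, $V \cap X \sub V$, giving $\partial(U,V) \le \partial(U \cap X, V \cap X) = d(U \cap X, V \cap X)$.

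The content is therefore the last inequality, for which I would exhibit a single $1$-Lipschitz function realising the distance. Set $\alpha = d(U \cap X, V \cap X)$ and define $f \colon X \to [0,\alpha]$ by
\[
  f(x) = \min\bigl(\alpha,\, d(x, U \cap X)\bigr).
\]
Then $f \in \cli(X)$: it is bounded and, as a truncated distance function, $1$-Lipschitz. By construction $f \equiv 0$ on $U \cap X$, and by the choice of $\alpha$ we have $f \equiv \alpha$ on $V \cap X$.

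Let $\tilde f$ denote the unique $\tau$-continuous extension of $f$ to $S(X)$, which by construction of $\partial$ is also $\partial$-$1$-Lipschitz. The key observation is that $X$ is dense in $S(X)$ and $U$ is $\tau$-open, so $U \cap X$ is $\tau$-dense in $U$, hence in $\cl[\tau]{U}$; by $\tau$-continuity of $\tilde f$ we conclude $\tilde f \equiv 0$ on $\cl[\tau]{U}$. Symmetrically, $\tilde f \equiv \alpha$ on $\cl[\tau]{V}$. Therefore for any $a \in \cl[\tau]{U}$ and $b \in \cl[\tau]{V}$,
\[
  \partial(a,b) \ge |\tilde f(a) - \tilde f(b)| = \alpha,
\]
which gives $\partial(\cl[\tau]{U},\cl[\tau]{V}) \ge \alpha$ and closes the loop.

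There is no real obstacle here; the only subtle point is verifying that $U \cap X$ is $\tau$-dense in $\cl[\tau]{U}$, which falls out immediately from density of $X$ in $S(X)$ and openness of $U$. Once that is observed, the proof is just the standard trick of separating two closed sets in a (topo)metric space by the truncated distance function to one of them.
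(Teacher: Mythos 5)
Your proof is correct and follows essentially the same route as the paper: the trivial inequalities collapse the chain, and the real content is witnessed by the ($\tau$-continuously extended) distance function to $U \cap X$, which vanishes on $\cl[\tau]{U}$ and is at least $d(U\cap X, V\cap X)$ on $\cl[\tau]{V}$ by density of $X$ in $S(X)$. The only cosmetic difference is that you truncate the distance function at $\alpha$, whereas the paper uses $d(\cdot, U\cap X)$ directly (boundedness being automatic since $d$ is assumed bounded).
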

\begin{proof}
First note that, since $X$ is dense in $S(X)$, we have $\cl[\tau]{U}= \cl[\tau]{U \cap X}$ and $\cl[\tau]{V}= \cl[\tau]{V \cap X}$. Consider the function $f \in \cli(X)$ defined by $f(x) = d(x, U \cap X)$. It extends to a $\tau$-continuous, $\partial$-$1$-Lipschitz map on $S(X)$, which we still denote by $f$. Since $f=0$ on $U \cap X$, we must also have $f=0$ on $\cl[\tau]{U}$ by continuity; similarly, $f \ge d(V \cap X, U \cap X)$ on $V \cap X$, so $f \ge d(V \cap X,U\cap X)$ on $\cl[\tau]{V}$.
Hence $f$ witnesses the fact that $\partial(\cl[\tau]{U}, \cl[\tau]{V}) \ge d(V \cap X, U \cap X)$; since $d(U \cap X,V \cap X)$ is equal to $\partial(U \cap X, V \cap X)$ by the definition of $\partial$, this inequality must in fact be an equality, and we are done.
\end{proof}


\begin{lem}\label{l:caraccompact2}
Assume that $(a_n)$ is a sequence in $S(X)$ and $\delta > 0$ is such that $\partial(a_n, a_m) > \delta$ for all $n \neq m$. Then, for every $\varepsilon < \delta/2$, there exist a subsequence $(b_n)$ of $(a_n)$ and $\tau$-open sets $(U_n)$ such that $b_n \in U_n$ and $\partial(U_n,U_m) \ge \varepsilon$ for all $n \ne m$.
\end{lem}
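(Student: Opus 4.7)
The plan is to construct the subsequence $(b_k) = (a_{n_k})$ and the $\tau$-open sets $(U_k)$ by induction, using \autoref{l:caraccompact1} to translate the desired $\partial$-separation $\partial(U_k, U_j) \ge \varepsilon$ into the $d$-separation $d(U_k \cap X, U_j \cap X) \ge \varepsilon$ of their traces in $X$. I maintain, at the end of stage $k-1$, indices $n_1 < \dots < n_{k-1}$, $\tau$-open sets $U_j \ni a_{n_j}$ with $\partial(U_i, U_j) \ge \varepsilon$ for $i \ne j$, and an infinite set $I_{k-1} \sub \bN$ of remaining candidate indices such that $a_n \notin \overline{N_\varepsilon(U_j \cap X)}^\tau$ for every $n \in I_{k-1}$ and $j < k$ (where $N_\varepsilon(A) = \{x \in X : d(x,A) < \varepsilon\}$). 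This invariant forces all remaining $a_n$'s into the $\tau$-open set $W_k := S(X) \setminus \bigcup_{j<k} \overline{N_\varepsilon(U_j \cap X)}^\tau$, whose traces in $X$ lie $\varepsilon$-away from each $U_j \cap X$; so, by \autoref{l:caraccompact1}, any $\tau$-open subset of $W_k$ is automatically $\partial$-separated from all earlier $U_j$ by at least $\varepsilon$.

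At stage $k$, set $n_k = \min I_{k-1}$ and $a_* = a_{n_k}$. The crucial task is to pick a $\tau$-open $U_k \ni a_*$ with $U_k \sub W_k$, together with an infinite $I_k \sub I_{k-1} \setminus \{n_k\}$ such that $a_n \notin \overline{N_\varepsilon(U_k \cap X)}^\tau$ for every $n \in I_k$. For an individual candidate $n$, the inequality $\partial(a_*, a_n) > \delta > 2\varepsilon$ and the definition of $\partial$ yield a $1$-Lipschitz $f_n \in \cli(X)$ with $f_n(a_*) = 0$ and $f_n(a_n) > 2\varepsilon$; the $\tau$-open set $V_n := \{z : f_n(z) < \varepsilon\}$ then contains $a_*$, and the $1$-Lipschitz property gives $N_\varepsilon(V_n \cap X) \sub \{x : f_n(x) < 2\varepsilon\}$, whose $\tau$-closure excludes $a_n$. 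Thus \emph{per-$n$} separation is easy; the challenge is to combine these neighborhoods into a single $U_k$ that works for infinitely many $n$'s at once.

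Combine them by contradiction. Assume the \emph{forbidden case}: for every $\tau$-open $V \ni a_*$ with $V \sub W_k$, the set $\{n \in I_{k-1} \setminus \{n_k\} : a_n \in \overline{N_\varepsilon(V \cap X)}^\tau\}$ is cofinite. For each such pair $(V, n)$, density of $X$ in $S(X)$, $\tau$-compactness of $\overline{V}^\tau$, and the lower semicontinuity of $\partial$ give a witness $z_n^V \in \overline{V}^\tau$ with $\partial(a_n, z_n^V) \le \varepsilon$ (any net $x_\alpha \in N_\varepsilon(V\cap X)$ with $x_\alpha \to a_n$ in $\tau$ comes with $y_\alpha \in V \cap X$ at $d$-distance less than $\varepsilon$; a $\tau$-cluster point of $(y_\alpha)$ is the desired $z_n^V$, and the estimate follows from l.s.c. of $\partial$). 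As $V$ shrinks in $\tau$ toward $\{a_*\}$ — using $\bigcap_{V \ni a_*} \overline{V}^\tau = \{a_*\}$ in the compact Hausdorff space $S(X)$ — the witnesses $z_n^V$ converge to $a_*$ in $\tau$; a second application of l.s.c.\ of $\partial$ should then give $\partial(a_n, a_*) \le \varepsilon < \delta$ for some $n$, contradicting the hypothesis.

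The principal obstacle is making this final step genuinely rigorous: the cofinite set of "captured" $n$'s depends on $V$, so to fix an $n$ and let $V$ shrink one needs a diagonal argument, made delicate by the possible failure of first-countability of $S(X)$ at $a_*$. I expect to resolve this by iterating a carefully chosen countable family of neighborhoods — concretely, finite intersections of the $V_n$'s constructed above — exploiting the monotonicity $V \sub V' \Rightarrow \overline{N_\varepsilon(V \cap X)}^\tau \sub \overline{N_\varepsilon(V' \cap X)}^\tau$ together with the topometric extremal-disconnectedness encoded in \autoref{l:caraccompact1}, to force the witness points to converge to $a_*$ along a net for which the required inequality is genuinely available at some fixed $n$.
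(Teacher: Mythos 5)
You are right to flag the last step as the problem: the proof is incomplete precisely there, and moreover the repair you sketch cannot work in the form proposed. The contradiction you aim for needs a \emph{single} index $n$ that is ``captured'' (i.e.\ $a_n \in \cl[\tau]{N_\varepsilon(V \cap X)}$) by a family of neighbourhoods $V \ni a_*$ shrinking to $a_*$, so that the witnesses $z_n^V$ $\tau$-converge to $a_*$ and lower semicontinuity of $\partial$ yields $\partial(a_n,a_*) \le \varepsilon$. But your own per-$n$ construction rules this out: if $V \sub V_n = \{z : f_n(z) < \varepsilon\}$, then $N_\varepsilon(V \cap X) \sub \{x \in X : f_n(x) < 2\varepsilon\}$, whence $\cl[\tau]{N_\varepsilon(V \cap X)} \sub \{z : f_n(z) \le 2\varepsilon\}$, which does not contain $a_n$. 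So for each fixed $n$, no sufficiently small $V$ captures $n$; the witnesses $z_n^V$ exist only along neighbourhoods that do \emph{not} shrink to $a_*$, and the l.s.c.\ argument has nothing to bite on. Diagonalizing over finite intersections does not help either: for $V \sub \bigcap_{n \in F} V_n$ the ``forbidden case'' only tells you that cofinitely many indices \emph{outside} $F$ are captured, which is perfectly consistent, so the forbidden case is never refuted this way. The deeper issue is that your induction insists on keeping $a_{n_k} = \min I_{k-1}$ at every stage, i.e.\ on finding, around a \emph{prescribed} point, a neighbourhood whose $\varepsilon$-fattening misses infinitely many of the remaining terms; this is a strictly stronger claim than the lemma, and nothing in your argument establishes it.

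The paper avoids this entirely by not prescribing which point survives. Since $\partial(a_0,a_1) > \delta$, there is a single $f \in \cli(X)$ with $|f(a_0)-f(a_1)| > \delta$, and the triangle inequality forces, for each $n$, either $|f(a_0)-f(a_n)| > \delta/2$ or $|f(a_1)-f(a_n)| > \delta/2$; one alternative occurs infinitely often, and one keeps \emph{whichever} of $a_0,a_1$ that alternative favours, discarding the other. Iterating produces a subsequence $(b_n)$ and functions $f_n \in \cli(X)$ with $f_n(b_n) = 0$ and $f_n(b_m) > \delta/2$ for $n < m$, and the open sets are then defined at the end by $U_n = \{a : f_n(a) < \delta/2 - \varepsilon \text{ and } f_k(a) > \delta/2 \text{ for } k < n\}$, with $f_n$ (being $\partial$-$1$-Lipschitz on $S(X)$) witnessing $\partial(U_n,U_m) \ge \varepsilon$. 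The freedom to drop one of the two candidate points is exactly the idea missing from your scheme; if you want to salvage your inductive framework, you must build that pigeonhole choice into it rather than anchor each stage at the first remaining term.
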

\begin{proof}
Let $f \in \cli(X)$ be such that $|f(a_0)-f(a_1)| > \delta$. The triangle inequality implies that, for all $n>1$, we have  $|f(a_0)-f(a_n)|> \frac{\delta}{2}$ or $|f(a_1)-f(a_n)| > \frac{\delta}{2}$. One of those cases happens infinitely many times. Thus we see that for any such sequence $(a_n)$, there exists $i_0 \in \{0,1\}$, an infinite subset $\{i_n\}_{n \ge 1} \subseteq \bN \setminus \{0,1\}$ and $f_0 \in \cli(X)$ such that $f_0(a_{i_0})=0$ and $f_0(a_{i_n}) > \frac{\delta}{2}$ for all $n \ge 1$. Repeating this infinitely many times, we build a subsequence $(b_n)$ of $(a_n)$ and a sequence of maps $f_n \in \cli(X)$ such that $f_n(b_n)=0$ for all $n$ and $f_n(b_m) > \frac{\delta}{2}$ for all $n < m$.

Set $U_n= \{a \in S(X) \colon f_n(a) < \frac{\delta}{2} - \varepsilon \text{ and } f_k(a) > \frac{\delta}{2} \text{ for all } k <n \}$.
We have $b_n \in U_n$ for all $n$, and the function $f_n$ witnesses the fact that $\partial(U_n,U_m) \ge \varepsilon$ for all $n<m$.
\end{proof}

\begin{proof}[Proof of Theorem \ref{t:caraccompact}]
  Let $(a_n)$ be a $\tau$-convergent sequence in $S(X)$ with limit $a$ and suppose that it does not admit a $\partial$-Cauchy subsequence. Then there exists $\delta > 0$ such that $\partial(a_n, a_m) > \delta$ for all $n \neq m$ and we can apply Lemma~\ref{l:caraccompact2} to obtain a subsequence $(b_n)$ of $(a_n)$ and $\tau$-open subsets $U_n$ of $S(X)$ such that $b_n \in U_n$ and $\partial(U_n,U_m) \ge \delta/4$ for all $n \ne m$. Let
\[
  U= \bigcup_{n} U_{2n} \quad \text{ and } \quad V = \bigcup_{n} U_{2n+1}.
\] 
Then we have both that $\partial(U,V) \ge \delta/4$ and $a \in \cl[\tau]{U} \cap \cl[\tau]{V}$, which contradicts Lemma~\ref{l:caraccompact1}.

Thus every $\tau$-convergent sequence admits a $\partial$-Cauchy subsequence, which combined with the facts that $\partial$ is complete and that the $\partial$-topology refines $\tau$ implies the statement of the theorem.
\end{proof}

\begin{cor}
  \label{c:metrizable-subsp}
  Let $K \subseteq S(X)$ be a subset such that $K$ equipped with the relative $\tau$-topology is a metrizable topological space. Then the $\partial$-topology and $\tau$ coincide on $K$ and in particular, if $K$ is $\tau$-closed, $(K, \partial)$ is compact.
\end{cor}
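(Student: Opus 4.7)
The plan is to reduce the statement to Theorem~\ref{t:caraccompact} via sequential arguments. Since $\partial$ refines $\tau$ on all of $S(X)$ by the definition of a compact topometric space, the only thing to prove is that on $K$, $\tau$ refines $\partial$, i.e., that the identity map $(K,\tau) \to (K,\partial)$ is continuous.

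First I would observe that $(K, \partial)$ is a metric space (as a subset of the metric space $(S(X), \partial)$), and by hypothesis $(K, \tau)$ is metrizable. Since both spaces are first countable, it suffices to show that the identity $(K, \tau) \to (K, \partial)$ is sequentially continuous. So suppose $(a_n)$ is a sequence in $K$ that $\tau$-converges to some $a \in K$. By Theorem~\ref{t:caraccompact} applied in $S(X)$, the sequence $(a_n)$ is $\partial$-convergent to some $b \in S(X)$. Because $\partial$ refines $\tau$, $\partial$-convergence to $b$ implies $\tau$-convergence to $b$; by Hausdorffness of $\tau$, we get $b = a$. Thus $a_n \to a$ in $\partial$, so the identity is sequentially continuous, hence continuous.

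Together with the reverse inclusion of topologies, this shows that $\tau$ and $\partial$ coincide on $K$. For the final assertion, if $K$ is additionally $\tau$-closed, then $K$ is a $\tau$-closed subset of the $\tau$-compact space $S(X)$, so $(K, \tau)$ is compact; since $\tau$ agrees with $\partial$ on $K$, the metric space $(K, \partial)$ is compact as well.

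No real obstacle is anticipated: the core content is packaged in Theorem~\ref{t:caraccompact}, and the only subtlety is the appeal to metrizability of $(K, \tau)$ to upgrade sequential continuity to continuity, which is standard.
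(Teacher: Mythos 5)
Your argument is correct and is essentially the paper's proof, just written out in more detail: the paper likewise notes that $\partial$ refines $\tau$, and that metrizability of $(K,\tau)$ reduces the converse to convergence of sequences, which is handled by Theorem~\ref{t:caraccompact}. Your extra care in identifying the $\partial$-limit with the $\tau$-limit (via lower refinement and Hausdorffness) is exactly the implicit step in the paper's one-line appeal to the theorem.
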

\begin{proof}
  We already know that the $\partial$-topology is finer than $\tau$. To see the converse, note that if $K$ is metrizable, its topology is determined by convergence of sequences and then apply Theorem~\ref{t:caraccompact}.
\end{proof}


\section{Proof of the main result}
Let $G$ be a Polish group, that is, a completely metrizable, separable topological group. 
Recall that the \emph{right uniformity} on $G$ is given by the basis of entourages $\{(g,h) \colon gh^{-1} \in U\}$, where $U$ ranges over all neighbourhoods of $1_G$; by the Birkhoff--Kakutani theorem, there exists a compatible right-invariant distance on $G$ and any such distance must induce the right uniformity. We fix a bounded, right-invariant distance $\dR$ on $G$. Then $(G, \dR)$ is a metric space and we construct its topometric Stone--\v{C}ech compactification $(S(G), \tau, \partial)$ as in the previous section.

In this setting, the universal property of $S(G)$ translates to the following: if $G \actson X$ is a $G$-flow and $x_0 \in X$, then there exists a unique $G$-map $\pi \colon S(G) \to X$ such that $\pi(1_G) = x_0$. It is then clear that whenever $M \subseteq S(G)$ is a minimal subflow, $M$ is \emph{universal}, that is, $M$ equivariantly maps onto every other minimal $G$-flow. One important property of (any such) $M$ that we will need is that it is \emph{coalescent}, i.e., any endomorphism of it is an automorphism. This fact is due to Ellis (see Proposition 3.3 in \cite{Uspenskij2000} for a simple proof). This implies that any two universal minimal flows must be isomorphic, which allows us to speak of \emph{the} universal minimal flow $M(G)$ of $G$.

We will need the following folklore lemma, which follows from standard facts on the enveloping semigroup. Recall that if $Z$ is a topological space, a continuous map $r \colon Z \to Z$ is called a \emph{retraction} if $r(r(z)) = r(z)$ for all $z \in Z$. 
\begin{lem}\label{l:retraction}
Let $G$ be a topological group, let $S(G)$ be its Samuel compactification, and let $M$ be a minimal subflow of $S(G)$. Then there exists a $G$-equivariant retraction 
$r \colon S(G) \to M$.
\end{lem}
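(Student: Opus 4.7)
The plan is to turn $S(G)$ into a compact right-topological semigroup, extract an idempotent of the minimal flow $M$ via Ellis's classical theorem, and take right multiplication by that idempotent as the retraction $r$.

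First, I would use the universal property of $S(G)$ quoted just above the lemma, applied to the $G$-flow $S(G)$ with distinguished point $a$: for each $a \in S(G)$, this gives a unique continuous $G$-equivariant map $\phi_a \colon S(G) \to S(G)$ with $\phi_a(1_G) = a$. Define $b * a := \phi_a(b)$; in particular $g * a = g \cdot a$ for $g \in G$. Associativity follows by a standard density argument: for fixed $a, c$, both maps $b \mapsto (b * a) * c$ and $b \mapsto b * (a * c)$ are $\tau$-continuous in $b$ (each is a composition of $\phi$'s), and they coincide on the dense subset $G$, since for $g \in G$ one computes $(g * a) * c = \phi_c(g \cdot a) = g \cdot \phi_c(a) = g * (a * c)$ using $G$-equivariance of $\phi_c$. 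Hence $(S(G), *)$ is a compact semigroup in which right multiplication $b \mapsto b * a$ is continuous for each fixed $a$.

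Next I would observe that closed subflows of $S(G)$ correspond exactly to closed left ideals of $(S(G),*)$: if $L$ is closed and $G$-invariant and $a \in L$, then $\phi_a$ is continuous, $\phi_a(G) = G \cdot a \sub L$, and $G$ is dense in $S(G)$, so $\phi_a(S(G)) = S(G) * a \sub L$. In particular, $M$ is a minimal closed left ideal of the compact right-topological semigroup $S(G)$, and Ellis's theorem produces an idempotent $u \in M$ (that is, $u * u = u$).

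Setting $r := \phi_u$, so $r(a) = a * u$, continuity is built in, and $G$-equivariance is precisely the associativity identity $(g * a) * u = g * (a * u)$. The image $r(S(G)) = S(G) * u$ is a non-empty closed $G$-invariant subset of $M$ (because $u \in M$ and $M$ is a left ideal), hence equals $M$ by minimality. To verify $r|_M = \id_M$, write any $m \in M$ as $m = a * u$ for some $a \in S(G)$; then $r(m) = (a * u) * u = a * (u * u) = a * u = m$. The only genuinely non-trivial ingredient is Ellis's idempotent theorem applied to the right-topological semigroup; once that structure on $S(G)$ is set up, the rest is a direct calculation.
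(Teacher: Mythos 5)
Your proof is correct, but it takes a different route from the one written in the paper. You build the enveloping-semigroup structure explicitly: extend each orbit map to get $\phi_a$, define $b*a=\phi_a(b)$, check associativity by density, observe that closed subflows are closed left ideals, extract an idempotent $u\in M$ via the Ellis--Numakura lemma (using that $M$, being a left ideal, is a compact subsemigroup with continuous right translations), and take $r=\phi_u$; your verifications of equivariance, of $r(S(G))=M$ by minimality, and of $r|_M=\id_M$ via $m=a*u$ and $u*u=u$ are all sound. The paper instead proves the lemma in three lines from ingredients it has already set up: the universal property gives a $G$-map $f\colon S(G)\to M$, its restriction to $M$ is an endomorphism of $M$, hence an automorphism $\varphi$ by coalescence of minimal universal flows (Ellis, via Uspenskij), and $r=\varphi^{-1}\circ f$ is the desired retraction. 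Interestingly, your argument is essentially the ``standard facts on the enveloping semigroup'' that the paper alludes to when calling the lemma folklore, whereas its written proof avoids the semigroup machinery altogether. The trade-off: the paper's proof is shorter because coalescence is already quoted (it is needed anyway to make $M(G)$ well defined), while yours is self-contained modulo the idempotent lemma, does not use coalescence, and yields the extra structural information that the retraction can be taken to be right multiplication by an idempotent of a minimal left ideal.
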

\begin{proof}
The universal property of $S(G)$ gives us a $G$-equivariant map $f \colon S(G) \to M$. The restriction of $f$ to $M$ is an endomorphism of $M$ and is thus, by coalescence, an automorphism of $M$; denote this automorphism by $\varphi$. Then $\varphi^{-1} \circ f$ is a retraction from $S(G)$ onto $M$.
\end{proof} 


To prove the existence of a comeagre orbit in Theorem~\ref{t:main2}, we use the following criterion, due to C.~Rosendal.

\begin{prp}
  \label{p:comeagre-orbit}
  Let $X$ be a Polish space and $G$ be a Polish group acting on $X$ continuously and topologically transitively. Then the following are equivalent:
  \begin{enumerate}
  \item \label{i:co:1} There exists a comeagre $G$-orbit;

  \item \label{i:co:2} For any open $V \ni 1_G$ and any non-empty open subset $U$ of $X$, there exists a non-empty open $U'\subseteq U$ such that for any non-empty open $W_1,W_2 \subseteq U'$, we have $V \cdot W_1 \cap W_2 \ne \emptyset$.
  \end{enumerate}
\end{prp}
\begin{proof}
  \ref{i:co:1} $\Rightarrow$ \ref{i:co:2}. Suppose that $G \cdot x_0 \sub X$ is a comeagre orbit and let $V \ni 1_G$ and $U \sub X$ be given. Let $1_G \ni V' \sub G$ be open such that $V' V'^{-1} \sub V$. Let $x \in U \cap G \cdot x_0$. By Effros's theorem, there exists an open $U_0 \sub X$ such that $V' \cdot x = U_0 \cap G \cdot x$, and in particular, $U_0 \sub \cl{V' \cdot x}$. Finally, let $U' = U_0 \cap U$ and let $W_1, W_2 \sub U'$. Then there exist $v_1, v_2 \in V'$ such that $v_1 \cdot x \in W_1$ and $v_2 \cdot x = v_2 v_1^{-1} v_1 \cdot x \in V \cdot W_1 \cap W_2$.

  \ref{i:co:2} $\Rightarrow$ \ref{i:co:1}. Suppose that there is no comeagre orbit. Since we are assuming that the action is topologically transitive, the topological zero--one law implies that all orbits are meagre. Let $x \in X$ be arbitrary. As $G \cdot x$ is meagre, there exist closed, nowhere dense sets $\{F_n : n \in \bN\}$ such that $G \cdot x \sub \bigcup_n F_n$. Let $B_n = \{g \in G : g \cdot x \in F_n\}$. Then each $B_n$ is closed and by the Baire category theorem, there exists $n$ such that $B_n$ has non-empty interior. Let $V \ni 1_G$ be open and $g \in G$ be such that $gV \sub B_n$. Then $\cl{V \cdot x} \sub g^{-1} \cdot F_n$, so $V \cdot x$ is nowhere dense. We conclude that for every point $x \in X$, there exists $V \ni 1_G$ such that $V \cdot x$ is nowhere dense. As there are only countably many possible $V$ (we can always restrict to a basis at $1_G$), there exists $V \ni 1_G$ and a non-empty, open set $U \sub X$ such that the set
  \[
    \{x \in U : V \cdot x \text{ is somewhere dense}\}
  \]
  is meagre. (Here we are using the fact that $\{x \in X : V \cdot x \text{ is nowhere dense}\}$ is a Borel set and hence has the property of Baire, and that a non-meagre set with the Baire property must be comeagre in an open set.) Let now $U' \sub U$ be as given by \ref{i:co:2}. The property of $U'$ implies that the set 
  \begin{equation*}
    \{ x \in U' : V \cdot x \text{ is dense in } U' \} = \{x \in U' : \forall W_1 \sub U \ V \cdot x \cap W_1 \neq \emptyset \}
  \end{equation*}
  is dense $G_\delta$ in $U'$, which is a contradiction (the quantifier $\forall W_1 \sub U$ can be taken to range over a countable basis of $U$).
\end{proof}

We are now ready to prove our main result.
\begin{proof}[Proof of Theorem \ref{t:main2}]
We view $M(G)$ as a subflow of $S(G)$ and fix a $G$-equivariant retraction $r \colon S(G) \to M(G)$, as given by Lemma~\ref{l:retraction}. To show that there is a comeagre orbit, we will apply Proposition~\ref{p:comeagre-orbit}. Let $V \ni 1_G$ and $U \sub M(G)$ be given. We may assume that $V = \{g : \dR(g,1_G) < \varepsilon\}$ for some $\varepsilon > 0$. Since $M(G)$ is metrizable, we have that $\tau$ and the $\partial$-topology coincide on $M(G)$ by Corollary~\ref{c:metrizable-subsp} and we can find a non-empty $\tau$-open $U' \subseteq U$ of $\partial$-diameter $< \varepsilon$. Let $W_1, W_2 \subseteq U'$ be non-empty, open. By the choice of $U'$, we have $\partial(W_1,W_2) < \varepsilon$; since $W_1 \subseteq r^{-1}(W_1)$ and $W_2 \subseteq r^{-1}(W_2)$, we also have that $\partial(r^{-1}(W_1), r^{-1}(W_2))< \varepsilon$. Then Lemma~\ref{l:caraccompact1} tells us that $\dR(r^{-1}(W_1) \cap G, r^{-1}(W_2) \cap G) < \varepsilon$. So we can find $f_1 \in r^{-1}(W_1) \cap G$ and $f_2 \in r^{-1}(W_2) \cap G$ such that $\dR(f_1, f_2) < \varepsilon$, that is, $f_2 f_1^{-1} \in V$. Since $r(f_2) = f_2f_1^{-1}r(f_1) \in W_2 \cap f_2f_1^{-1}W_1$, the criterion is verified and we are done.
\end{proof}
Note that even though our proof produces a ``natural'' metric $\partial$ on $M(G)$ compatible with the topology, this metric is not canonical: it depends on the right-invariant metric on $G$ that we start with, and more importantly, on the copy of $M(G)$ in $S(G)$ that we consider.

Theorem~\ref{t:main1} follows directly from Theorem~\ref{t:main2} and \cite[Theorem~1.2]{Melleray15}.

We close the paper by pointing out several consequences of Theorem~\ref{t:main1}.
\begin{cor}
  \label{c:minimal}
  Suppose that $G$ is a Polish group which admits a closed, extremely amenable, co-precompact subgroup $H$. Then there exists a subgroup $H' \leq G$ which is still extremely amenable and co-precompact, and, moreover $\widehat{G/H'}$ is minimal.
\end{cor}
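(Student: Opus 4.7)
The plan is to reduce Corollary~\ref{c:minimal} to Theorem~\ref{t:main1} by first establishing that $M(G)$ is metrizable; then $H' := G^*$ supplied by Theorem~\ref{t:main1} will satisfy $\widehat{G/H'} = M(G)$, which is automatically minimal.

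First I would verify that $\widehat{G/H}$ is metrizable. Since $H$ is co-precompact, this is a compact $G$-flow. The right uniformity on $G$ has a countable basis of entourages (it is induced by the separable right-invariant metric $\dR$), and this property passes to the quotient uniformity on $G/H$. A compact Hausdorff space with a countable uniform basis is metrizable, so $\widehat{G/H}$ is a metrizable compact $G$-flow.

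Next I would show that $M(G)$ is a continuous equivariant image of $\widehat{G/H}$, hence itself metrizable. Applying extreme amenability of $H$ to the action $H \actson M(G)$ yields a point $m_0 \in M(G)$ fixed by $H$. The orbit map $g \mapsto g \cdot m_0$ from $G$ to $M(G)$ is right uniformly continuous (a standard fact for continuous actions on compact spaces, via equicontinuity at $m_0$) and is constant on left $H$-cosets, so it factors through a uniformly continuous map $G/H \to M(G)$. This map extends continuously to a $G$-equivariant map $\widehat{G/H} \to M(G)$, whose image contains the dense orbit $G \cdot m_0$ and is therefore all of $M(G)$ by minimality. Hence $M(G)$ is metrizable.

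Now Theorem~\ref{t:main1} applies and furnishes a closed, co-precompact, extremely amenable subgroup $H' \leq G$ with $\widehat{G/H'} = M(G)$; by definition of $M(G)$, this flow is minimal. There is no real obstacle beyond what is already packaged in Theorem~\ref{t:main1}: the only new input needed from the hypothesis on $H$ is the fixed-point argument above, which converts extreme amenability of $H$ and co-precompactness into metrizability of $M(G)$.
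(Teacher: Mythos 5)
Your argument is correct and takes essentially the same route as the paper: show that $M(G)$ is metrizable by way of the metrizable compact flow $\widehat{G/H}$, and then invoke Theorem~\ref{t:main1} to produce $H' = G^*$ with $\widehat{G/H'} = M(G)$ minimal. The only difference is one of packaging: where the paper cites Pestov's lemma that $\widehat{G/H}$ maps onto every $G$-flow (so that a minimal subflow of it is $M(G)$, giving metrizability via a subspace), you reprove exactly that instance by the fixed-point and orbit-map argument and obtain $M(G)$ as a continuous equivariant image instead, which is equally valid.
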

\begin{proof}
  The flow $G \actson \widehat{G/H}$ is metrizable and it maps to any $G$-flow (Pestov~\cite[Lemma~2.3]{Pestov2002a}; see also \cite{Melleray15}); any of its minimal subflows is isomorphic to $M(G)$, which is therefore metrizable, and thus Theorem~\ref{t:main1} applies.
\end{proof}

Recall that a Polish group $G$ is called a \emph{CLI group} if its right uniformity is complete (equivalently, admits a right-invariant, complete, compatible metric) and a \emph{SIN group} if its left and right uniformities coincide (equivalently, admits a bi-invariant compatible metric). Every Polish SIN group is CLI.
\begin{cor}
  \label{c:SIN-groups}
  Assume that $G$ is a Polish SIN group and $M(G)$ is metrizable. Then $M(G)$ is a compact group, that is, there exists a \emph{normal}, extremely amenable, closed subgroup $G^*$ of $G$ such that $M(G)=G/G^*$. In particular, $G$ is amenable.
\end{cor}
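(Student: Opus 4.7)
The plan is to apply Theorem~\ref{t:main1} to obtain a closed, co-precompact, extremely amenable subgroup $G^*$ with $M(G)=\widehat{G/G^*}$, and then use the SIN hypothesis to show that this $G^*$ is normal and that $M(G)$ is a compact topological group. First fix a bi-invariant compatible metric $d$ on $G$ and build the topometric structure $(S(G),\tau,\partial)$ from $d_R=d$. Bi-invariance of $d$ means that $\cli(G)$ is invariant under left translation by $G$, so substituting $h(x)=f(gx)$ in the definition of $\partial$ shows that $G$ acts on $S(G)$, and hence on the subflow $M(G)$, by $\partial$-isometries. Since $M(G)$ is metrizable, Corollary~\ref{c:metrizable-subsp} gives that $\partial\rest M(G)$ is compatible with the topology, so $G$ acts on the compact metric space $M(G)$ by isometries.

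Let $K$ be the closure of the image of $G$ in $\Iso(M(G),\partial)$ in the topology of uniform convergence; by Arzelà--Ascoli, $\Iso(M(G),\partial)$ is compact, hence so is $K$. Thus $K$ is a compact topological group with a canonical continuous homomorphism $\pi\colon G\to K$ of dense image. Since $K$-orbits in $M(G)$ are closed and contain dense $G$-orbits, $K$ acts transitively and $M(G)\cong K/H$ as $G$-spaces, with $H=\Stab_K(x_0)$. The left-translation action of $K$ on itself, pulled back through $\pi$, makes $K$ into a minimal $G$-flow in its own right, so by universality of $M(G)$ there is a $G$-equivariant continuous map $\phi\colon M(G)\to K$. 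Equivariance, continuity, and density of $\pi(G)$ in $K$ force the formula $\phi(kH)=kz$ for every $k\in K$, where $z:=\phi(x_0)$; well-definedness of this expression on $K/H$ then requires $hz=z$ for every $h\in H$, and since $K$ acts freely on itself by left translation we conclude $H=\{e\}$. Hence $M(G)=K$ is a compact topological group.

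Under this identification, every point of $M(G)=K$ has stabilizer $\ker\pi$, so the $G^*$ supplied by Theorem~\ref{t:main1} must coincide with $\ker\pi$ and is in particular normal in $G$. The quotient $G/G^*$ inherits the bi-invariant metric of $G$ and is thus a Polish SIN group, hence CLI; its right uniformity is totally bounded since the completion $K$ is compact, so $G/G^*$ is itself compact and equals $K=M(G)$. Amenability of $G$ then follows because it is an extension of the amenable group $G^*$ by the compact (hence amenable) group $G/G^*$. The main obstacle is the universality argument in the middle paragraph; the remaining steps are routine once SIN-ness has been translated into isometricity of the $G$-action on $M(G)$.
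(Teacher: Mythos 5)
Your proof is correct, and its overall strategy is the one the paper uses -- apply Theorem~\ref{t:main1}, turn the SIN hypothesis into an isometric action, pass to the compact closure of the image of $G$ in an isometry group, and identify $M(G)$ with that compact group by a universality argument -- but the execution differs in some genuine ways, all of which work. The paper first notes that a compatible bi-invariant metric on a Polish group is complete, so $G/G^*$ is complete and $M(G)=\widehat{G/G^*}=G/G^*$ from the start, and then lets $G$ act isometrically via the quotient metric on $G/G^*$; you instead act isometrically on $M(G)$ directly, using the $G$-invariance of the topometric distance $\partial$ on $S(G)$ (which does hold when $\dR$ is bi-invariant, since left translations then preserve $\cli(G)$) together with Corollary~\ref{c:metrizable-subsp}, and you recover transitivity and $M(G)=G/G^*$ only at the end. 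Your universality step also runs in the opposite direction: the paper maps the compact group $\overline{\varphi(G)}\leq\Iso(G/G^*)$ onto $M(G)$ by evaluation and uses universality/coalescence to see it is an isomorphism, while you map $M(G)$ into the minimal $G$-flow $K$ and use freeness of left translation to force the stabilizer $H$ to be trivial; both are sound. For amenability, the paper uses the Haar measure of the compact group $M(G)$ (an invariant measure on $M(G)$ pushes forward onto a minimal subflow of any flow), whereas you use that an extension of an amenable group by a compact quotient is amenable -- also a standard and correct route. The one step you should make explicit is why the $G^*$ of Theorem~\ref{t:main1} is a point stabilizer: since $M(G)=\widehat{G/G^*}$ \emph{as $G$-flows} and $G^*$ is closed, the stabilizer of the coset $1_G G^*\in G/G^*\subseteq\widehat{G/G^*}$ is exactly $G^*$, so your computation that every point stabilizer equals $\ker\pi$ indeed yields $G^*=\ker\pi$ and hence normality (the paper leaves this same point implicit).
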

\begin{proof}
  Theorem~\ref{t:main1} gives us an extremely amenable subgroup $G^*$ such that $M(G)= \widehat{G/G^*}$. Pick a compatible bi-invariant metric $d$ on $G$. Such a metric must be complete, so in that case, $G/G^*$ is also complete and we have $M(G)=G/G^*$. The action of $G$ on $G/G^*$ is isometric for the natural quotient metric on $G/G^*$, giving us a homomorphism $\varphi$ from $G$ to the compact group $\Iso(G/G^*)$. Then $\overline{\varphi(G)}$ is a compact subgroup of $\Iso(G/G^*)$, on which $G$ acts minimally (via the left-translation action of $\varphi(G)$). The evaluation map $\pi \colon \cl{\varphi(G)} \to G/G^*$, $\pi(f) = f(G^*)$ is continuous and $G$-equivariant from $\overline{\varphi(G)}$ to $G/G^*$. Since $G/G^*=M(G)$, it must be the case that $\pi$ is an isomorphism, which can only happen if $\varphi(G)$ is closed. Hence $G/G^* \cong \varphi(G)$ is a group. In particular, $M(G)$ carries a $G$-invariant measure, the Haar measure on $G/G^*$, and $G$ is amenable.
\end{proof}
\begin{rmk}
  In \cite{Angel2014}, Angel, Kechris, and Lyons ask whether for an amenable, Polish group $G$, the metrizability of $M(G)$ implies that it is uniquely ergodic (if it is, then so is every minimal $G$-flow). While we cannot answer this question in general, we note that Corollary~\ref{c:SIN-groups} and the uniqueness of the Haar measure imply that this is true for SIN groups.
\end{rmk}

\begin{rmk}
  In \cite{Glasner1998a}, Glasner asks whether every monothetic Polish group that admits no non-trivial homomorphisms to the circle has to be extremely amenable. We note that Corollary~\ref{c:SIN-groups} implies a positive answer in the case where $M(G)$ is metrizable. This was also observed by L.~Nguyen~Van~Thé (see the forthcoming paper \cite{Nguyen2017}).
\end{rmk}

\begin{rmk}
  Another interesting family of groups to which Corollary~\ref{c:SIN-groups} applies are the full groups of countable, Borel, measure-preserving, ergodic equivalence relations. (See, for example, \cite[Chapter~I, 3]{Kechris2010} for the definition and their basic properties.) If $E$ is hyperfinite, then the full group $[E]$ is extremely amenable and if it is not, then $[E]$ is not amenable (Giordano--Pestov~\cite{Giordano2007}), and therefore, by Corollary~\ref{c:SIN-groups}, its universal minimal flow is not metrizable. This suggests that for non-hyperfinite $E$, the category of minimal flows of $[E]$ is quite rich: it would be interesting to investigate how its structure reflects the structure of the equivalence relation $E$.
\end{rmk}

The argument used at the beginning of the proof of Corollary~\ref{c:SIN-groups} to show that $M(G)=G/G^*$ only uses the fact that $\dR$ is complete, so we also have the following.
\begin{cor}
  \label{c:CLI}
Assume that $G$ is a Polish CLI group and that $M(G)$ is metrizable. Then $G \actson M(G)$ is transitive and so is any other minimal $G$-flow.
\end{cor}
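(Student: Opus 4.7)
The plan is to follow the explicit hint in the paragraph preceding the corollary: reuse the opening argument from the proof of Corollary~\ref{c:SIN-groups}, observing that it only uses completeness of the right-uniform metric, not bi-invariance.

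First I would invoke Theorem~\ref{t:main1} directly: since $M(G)$ is metrizable, there is a closed, co-precompact, extremely amenable subgroup $G^* \leq G$ such that $M(G) = \widehat{G/G^*}$, where the hat denotes completion with respect to the quotient of the right-uniform structure on $G$. Co-precompactness gives us that this completion is compact (equivalently, $G/G^*$ is precompact in this uniformity).

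Next, I would use the CLI hypothesis: since $\dR$ is a complete right-invariant metric on $G$, the induced (right-invariant) quotient pseudometric on the coset space $G/G^*$ is complete as well -- this is exactly the argument that appears at the start of the proof of Corollary~\ref{c:SIN-groups}, and as noted it uses only completeness of $\dR$, not its bi-invariance. A precompact complete metric space is compact, so $G/G^*$ is already compact and equal to its completion. Thus
\[
  M(G) = \widehat{G/G^*} = G/G^*,
\]
and in particular the action $G \actson M(G)$ is transitive (with point stabiliser $G^*$).

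Finally, for an arbitrary minimal $G$-flow $Y$, the universality of $M(G)$ provides a continuous $G$-equivariant surjection $M(G) \twoheadrightarrow Y$; the image of a transitive action under an equivariant surjection is transitive, so $G \actson Y$ is transitive as well. There is no real obstacle here once the hint is unpacked; the only point requiring a little care is verifying that the quotient pseudometric on $G/G^*$ is genuinely complete when $\dR$ is -- this is the step I would write out most explicitly, mirroring the SIN-group argument verbatim.
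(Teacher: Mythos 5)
Your proposal is correct and follows essentially the same route as the paper, which proves this corollary simply by observing that the opening of the proof of Corollary~\ref{c:SIN-groups} (Theorem~\ref{t:main1} plus completeness of the quotient metric on $G/G^*$, hence $M(G)=G/G^*$) uses only completeness of $\dR$, and then passing transitivity to factors. The step you rightly flag as needing care --- completeness of the quotient metric on $G/G^*$ when $\dR$ is complete --- is indeed the only substantive point, and your plan to write it out explicitly is exactly what the paper leaves implicit.
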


We do not know an example of a CLI group $G$ which is not SIN and such that $M(G)$ is metrizable.

\begin{cor}
  Let $G$ be a Polish group such that $M(G)$ is metrizable and the action $G \actson M(G)$ is free. Then $G$ is compact.
\end{cor}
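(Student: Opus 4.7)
The plan is to apply Theorem~\ref{t:main1} and then exploit the right-invariance of $\dR$ to force the action on $M(G)$ to be transitive.

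By Theorem~\ref{t:main1} there is a closed, co-precompact, extremely amenable subgroup $G^* \leq G$ with $M(G) = \widehat{G/G^*}$. Since $G^*$ is closed the quotient space $G/G^*$ is Hausdorff, so the embedding $G/G^* \hookrightarrow \widehat{G/G^*}$ is injective and the identity coset $[1_G] \in \widehat{G/G^*}$ has stabilizer exactly $G^*$. Freeness then forces $G^* = \{1_G\}$, so $G$ is right-precompact and $M(G) = \widehat G$, the right-uniform completion of $(G, \dR)$, which is a compact metric space on which $\dR$ extends to a continuous metric.

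The key observation is that every orbit map is an isometric embedding. Right-invariance of $\dR$ gives $\dR(hg, h'g) = \dR(h, h')$ for all $h, h', g \in G$. Letting $g$ range over a Cauchy sequence converging to any $x \in \widehat G$ and using continuity of the extended left translations and of $\dR$ on $\widehat G$, we obtain $\dR(hx, h'x) = \dR(h, h')$. So the orbit map $h \mapsto h \cdot x$ is an isometric embedding of $(G, \dR)$ into $\widehat G$, and the orbit $G \cdot x$, with its subspace topology from $\widehat G$, is homeomorphic to the Polish group $G$.

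Hence each orbit $G \cdot x$ is a Polish subspace of the Polish space $\widehat G$, so by Alexandroff's theorem it is a $G_\delta$ in $\widehat G$. By the minimality of $M(G) = \widehat G$, every orbit is dense, and a dense $G_\delta$ in a non-empty Baire space is comeagre. Since distinct orbits are disjoint and no non-empty Baire space contains two disjoint comeagre subsets, there is only one orbit. Applied to the orbit through $1_G$, this reads $\widehat G = G \cdot 1_G = G$, so $G$ is compact.

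The conceptual crux is the isometric-orbit-map observation coming from the right-invariance of $\dR$; once in hand, Alexandroff's theorem and Baire category finish the proof with essentially no further work.
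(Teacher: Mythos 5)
Your proof is correct, and its skeleton is the same as the paper's: apply Theorem~\ref{t:main1}, observe that the stabilizer of the identity coset in $\widehat{G/G^*}$ is exactly $G^*$ (using that $G^*$ is closed so the quotient uniformity is separated), and conclude from freeness that $G^* = \{1_G\}$, i.e.\ that $G$ is precompact in its right uniformity. The only divergence is in the last step, which the paper dispatches in half a sentence (``which, in turn, means that $G$ is compact''), implicitly invoking the standard fact that a precompact Polish group is compact --- its completion is a compact Polish group and Polish subgroups of Polish groups are closed, so $G$ equals its completion. You instead prove this implication from scratch: right-invariance of $\dR$ makes every orbit map $h \mapsto h \cdot x$ an isometric embedding of $(G, \dR)$ into $\widehat{G}$, so each orbit is a completely metrizable, hence (Alexandrov) $G_\delta$, dense subset of $\widehat{G}$; two distinct orbits would be disjoint comeagre sets in a compact (Baire) space, so the action on $\widehat{G}$ is transitive and $G = \widehat{G}$ is compact. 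This is heavier than needed, but it is self-contained (no appeal to the group structure on the completion of a precompact group) and it is very much in the spirit of the paper's Corollaries~\ref{c:SIN-groups} and~\ref{c:CLI}, where completeness of the metric yields transitivity; your Baire-category substitute shows the same conclusion without completeness of $\dR$, at the cost of Alexandrov's theorem and the isometric-orbit observation.
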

\begin{proof}
  Theorem~\ref{t:main1} gives us a co-precompact $G^* \leq G$ such that $M(G) = \widehat{G/G^*}$; as the action $G \actson M(G)$ is free, this implies that $G^* = \{1_G\}$, which, in turn, means that $G$ is compact.
\end{proof}
Veech's well-known theorem that if $G$ is locally compact, then the action $G \actson S(G)$ is free now allows us to recover the following result of \cite{Kechris2005}: if $G$ is Polish, locally compact and $M(G)$ is metrizable, then $G$ is compact.

\end{document}